\pdfoutput=1
\documentclass[11pt,letterpaper]{article}

\usepackage[margin=1in]{geometry}
\usepackage[T1]{fontenc}
\usepackage{lmodern}
\usepackage{microtype}
\usepackage{amsmath,amssymb,amsthm}
\usepackage[round,authoryear]{natbib}
\usepackage[colorlinks=true,citecolor=blue,linkcolor=blue,urlcolor=blue]{hyperref}
\hypersetup{pdftitle={Counterexamples to a conjectured Schatten norm inequality},pdfauthor={Zhekai Pang}}

\newtheorem{conjecture}{Conjecture}
\newtheorem{theorem}{Theorem}

\newcommand{\M}{\mathrm M}
\newcommand{\R}{\mathbb R}
\newcommand{\C}{\mathbb C}
\newcommand{\Tr}{\operatorname{Tr}}

\title{Counterexamples to a conjectured Schatten norm inequality}
\author{Zhekai Pang\thanks{Universitat Pompeu Fabra, Barcelona, Spain. Email: zhekai.pang@upf.edu.}}
\date{}

\begin{document}

\maketitle

\begin{abstract}
For \(p\geq 1\) and \(m\geq 2\), let \(c_p(m)\) be the least constant such that
\[
 \left\|\sum_{k=1}^m A_k\right\|_p
 \leq c_p(m)\left\|\sum_{k=1}^m |A_k|\right\|_p
\]
for all square complex matrices. Tang and Zhang recently conjectured a closed formula for \(c_p(m)\), obtained from a common-left equiangular rank-one family. We disprove that formula for every \(m\geq2\) and every \(1<p<2\).
\end{abstract}

\noindent\textbf{Keywords:} Schatten norm; matrix absolute value; rank-one matrix; equiangular system.

\medskip
\noindent\textbf{2020 MSC:} 15A60; 47A30; 47A63.

\section{Introduction}

Let \(\M_n(\C)\) denote the set of \(n\) by \(n\) complex matrices. For \(X\in\M_n(\C)\), write
\[
 |X|=(X^*X)^{1/2},
 \qquad
 \|X\|_p=(\Tr |X|^p)^{1/p}
 \quad (1\leq p<\infty).
\]
The comparison between \(|A+B|\) and \(|A|+|B|\) is a fundamental problem in matrix analysis. Thompson proved that there are unitaries \(U,V\) such that
\[
 |A+B|\leq U|A|U^*+V|B|V^*
\]
\citep{Thompson1976}. For positive semidefinite matrices, Bourin and Uchiyama established a subadditivity inequality for nonnegative concave functions and all unitarily invariant norms \citep{BourinUchiyama2007}. Bourin later obtained the corresponding modulus inequality for normal matrices \citep{Bourin2010}.

For arbitrary matrices, Lee proved the dimension-free estimate
\[
 \|A_1+\cdots+A_m\|\leq \sqrt m\,\||A_1|+\cdots+|A_m|\|
\]
for every unitarily invariant norm and asked for the best constant for Schatten norms \citep{Lee2012}. Audenaert and Kittaneh recorded the more general concave-function version as Problem~1 in their survey \citep{AudenaertKittaneh2017}. In the two-summand Frobenius case, Lee conjectured the sharp constant \(\sqrt{(1+\sqrt2)/2}\). This was proved by Lin and Zhang \citep{LinZhang2022}; a later proof was given by Zhang \citep{Zhang2025}.

Tang and Zhang extended the linear problem to \(m\) summands and proved the sharp Frobenius inequality
\[
 \left\|\sum_{k=1}^m A_k\right\|_2
 \leq \sqrt{\frac{1+\sqrt m}{2}}
 \left\|\sum_{k=1}^m |A_k|\right\|_2.
\]
They also proved the dimension-free bound
\[
 c_p(m)\leq (\sqrt m)^{1-1/p}
\]
and proposed a closed formula for the optimal constant \citep{TangZhang2026}. Their conjectural value is generated by an equiangular rank-one family in which all matrices have the same left singular vector.

This paper shows that the proposed formula is false throughout the range \(1<p<2\). We use one equiangular system on the left and another on the right. The parameters are chosen so that the additional singular values in the numerator raise the resulting Schatten \(p\)-quotient above the conjectured value.

\section{The conjecture}

The following is Conjecture~3.1 of \citet{TangZhang2026}.

\begin{conjecture}[Tang--Zhang]
Let \(p>1\) and \(m\geq2\). Let \(c_p(m)\) be the smallest number such that
\[
 \left\|\sum_{k=1}^m A_k\right\|_p
 \leq c_p(m)\left\|\sum_{k=1}^m |A_k|\right\|_p
 \qquad\text{for all }A_1,\ldots,A_m\in\M_n(\C).
\]
Then
\begin{equation}\label{eq:conjectured-constant}
 c_p(m)=
 \frac{\sqrt{x_{p,m}(x_{p,m}+m-1)}}
 {(x_{p,m}^p+m-1)^{1/p}},
 \qquad x_{p,m}>0\text{ solves }x^p-2x-(m-1)=0.
\end{equation}
\end{conjecture}

\section{Counterexamples}

\begin{theorem}\label{thm:main}
Conjecture~1 is false for every integer \(m\geq2\) and every \(1<p<2\). More precisely, let \(x=x_{p,m}\) be the number in Conjecture~1 and set
\[
 y=x^{p/(2-p)}.
\]
Let \(e_1,\ldots,e_m\) be the standard basis of \(\R^m\), and let
\(\mathbf 1=(1,\ldots,1)^T\). For \(k=1,\ldots,m\), define
\begin{align}
 u_k&=\sqrt{\frac{m}{y+m-1}}
 \left(e_k+\frac{\sqrt y-1}{m}\mathbf 1\right),\label{eq:uk}\\
 v_k&=\sqrt{\frac{m}{x+m-1}}
 \left(e_k+\frac{\sqrt x-1}{m}\mathbf 1\right),\label{eq:vk}\\
 A_k&=u_kv_k^T.\label{eq:Ak}
\end{align}
Then
\begin{equation}\label{eq:strict-counterexample}
 \left\|\sum_{k=1}^m A_k\right\|_p
 >
 \frac{\sqrt{x(x+m-1)}}{(x^p+m-1)^{1/p}}
 \left\|\sum_{k=1}^m |A_k|\right\|_p.
\end{equation}
\end{theorem}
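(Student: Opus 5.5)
The plan is to compute both sides of \eqref{eq:strict-counterexample} in closed form, using the equiangular structure of the vectors \(u_k\) and \(v_k\). The inequality should then reduce to an elementary comparison.

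\emph{Step 1: the vectors are unit and equiangular.} With \(a=(\sqrt y-1)/m\) one computes
\[
\|e_k+a\mathbf 1\|^2=1+2a+ma^2=\frac{y+m-1}{m},
\]
so each \(u_k\) is a unit vector. For \(j\neq k\),
\[
\langle u_j,u_k\rangle=\frac{m(2a+ma^2)}{y+m-1}=\frac{y-1}{y+m-1}.
\]
Hence the Gram matrix \(U^TU\) of \(U=[u_1,\dots,u_m]\) has the form \(\alpha I+\beta J\), where \(J=\mathbf 1\mathbf 1^T\). Its eigenvalues are:
\begin{itemize}
\item \(my/(y+m-1)\) on \(\mathbf 1\);
\item \(m/(y+m-1)\) on \(\mathbf 1^\perp\).
\end{itemize}
The same computation with \(x\) in place of \(y\) gives the Gram matrix \(V^TV\) for \(V=[v_1,\dots,v_m]\).

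\emph{Step 2: the denominator.} Since \(\|u_k\|=1\), we have \(|A_k|=(v_ku_k^Tu_kv_k^T)^{1/2}=v_kv_k^T\). Therefore \(\sum_k|A_k|=VV^T\), whose nonzero eigenvalues are those of \(V^TV\). This gives
\[
\Big\|\sum_k|A_k|\Big\|_p=\frac{m}{x+m-1}\,(x^p+m-1)^{1/p}.
\]

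\emph{Step 3: the numerator.} We have \(\sum_kA_k=UV^T\). Its squared singular values are the eigenvalues of \(VU^TUV^T\), which coincide with the nonzero eigenvalues of \((U^TU)(V^TV)\). Both Gram matrices lie in the commutative algebra spanned by \(I\) and \(J\), so the eigenvalues multiply on \(\mathbf 1\) and on \(\mathbf 1^\perp\). The singular values of \(\sum_kA_k\) are therefore \(m/\sqrt{(x+m-1)(y+m-1)}\) times:
\begin{itemize}
\item \(\sqrt{xy}\), once;
\item \(1\), with multiplicity \(m-1\).
\end{itemize}
Hence
\[
\Big\|\sum_kA_k\Big\|_p=\frac{m\,\big((xy)^{p/2}+m-1\big)^{1/p}}{\sqrt{(x+m-1)(y+m-1)}}.
\]

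\emph{Step 4: the comparison.} Substituting Steps 2 and 3 into \eqref{eq:strict-counterexample}, the factors \((x^p+m-1)^{1/p}\) and \(m\) cancel. The inequality becomes
\[
\big((xy)^{p/2}+m-1\big)^{2/p}>x\,(y+m-1).
\]
The choice \(y=x^{p/(2-p)}\) gives
\[
(xy)^{p/2}=x^{\frac p2\left(1+\frac p{2-p}\right)}=x^{p/(2-p)}=y.
\]
So the condition is \((y+m-1)^{2/p-1}>x\). Since \(1<p<2\), the exponent \((2-p)/p\) is positive, and this is equivalent to
\[
y+m-1>x^{p/(2-p)}=y,
\]
which holds because \(m\geq2\). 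Notably, the specific equation \(x^p-2x-(m-1)=0\) is not needed beyond \(x>0\).

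The only step needing care is Step 3. There one must justify that the singular values of \(UV^T\) come from the product of the two commuting Gram matrices. This follows by writing \(|UV^T|^2\) and using cyclicity of nonzero spectra (\(XY\) and \(YX\) have the same nonzero eigenvalues). The remaining steps are routine algebra.
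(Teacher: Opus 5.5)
Your proof is correct, and Steps 1--3 produce the same spectra as the paper by a slightly different mechanism. The paper expands \(\sum_k A_k\) directly as \(\frac{m}{\sqrt{(x+m-1)(y+m-1)}}\bigl(I+\frac{\sqrt{xy}-1}{m}J\bigr)\). This matrix is positive semidefinite, so its singular values can be read off as eigenvalues. You instead write \(\sum_k A_k=UV^T\) and pass to the product of the commuting Gram matrices \(U^TU\) and \(V^TV\) via the \(XY\)/\(YX\) spectrum identity. Your version shows that only the two Gram matrices matter; the paper's expansion is more hands-on.

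The genuine difference is Step 4. The paper proves \((xy)^{p/2}+m-1>x^{p/2}(y+m-1)^{p/2}\) in three moves: it writes \((y+m-1)^{p/2}-y^{p/2}=\frac p2\int_y^{y+m-1}t^{p/2-1}\,dt\), bounds the integrand by \(y^{p/2-1}\), and then uses \(x^{p/2}y^{p/2-1}=1\). You notice that this relation is exactly \((xy)^{p/2}=y\). That turns the target into \((y+m-1)^{(2-p)/p}>x\), i.e.\ \(y+m-1>y\), an exact equivalence showing the comparison amounts to \(m>1\).

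The paper's mean-value route buys robustness in exchange. It only needs \(x^{p/2}y^{p/2-1}\le 1\) (even \(\le 2/p\) suffices), so the same construction beats the conjectured value for every \(y\ge x^{p/(2-p)}\). Your one-line reduction is tied to the exact choice of \(y\). As you observe, neither argument uses the defining equation \(x^p-2x-(m-1)=0\), only \(x>0\).
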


\begin{proof}
We first verify that the vectors in \eqref{eq:uk} and \eqref{eq:vk} are unit vectors. For each \(k\),
\[
 \left\|e_k+\frac{\sqrt y-1}{m}\mathbf 1\right\|_2^2
 =1+\frac{2(\sqrt y-1)}{m}+\frac{(\sqrt y-1)^2}{m}
 =\frac{y+m-1}{m}.
\]
It follows from \eqref{eq:uk} that \(\|u_k\|_2=1\). Replacing \(y\) by \(x\) gives \(\|v_k\|_2=1\). For \(j\neq k\), direct multiplication also gives
\[
 u_j^Tu_k=\frac{y-1}{y+m-1},
 \qquad
 v_j^Tv_k=\frac{x-1}{x+m-1},
\]
so both vector families are equiangular.

Since \(A_k=u_kv_k^T\) and both vectors are unit,
\[
 A_k^TA_k=v_k u_k^Tu_kv_k^T=v_kv_k^T.
\]
The matrix \(v_kv_k^T\) is an orthogonal projection, and therefore
\begin{equation}\label{eq:absolute-rank-one}
 |A_k|=(A_k^TA_k)^{1/2}=v_kv_k^T.
\end{equation}

Let \(J=\mathbf 1\mathbf 1^T\). We have
\[
 \sum_{k=1}^m e_ke_k^T=I,
 \qquad
 \sum_{k=1}^m e_k\mathbf 1^T=J,
 \qquad
 \sum_{k=1}^m \mathbf 1e_k^T=J.
\]
Expanding \eqref{eq:absolute-rank-one} and using these identities gives
\begin{align*}
 \sum_{k=1}^m |A_k|
 &=\frac{m}{x+m-1}
 \left[I+
 \left(\frac{2(\sqrt x-1)}{m}
 +\frac{(\sqrt x-1)^2}{m}\right)J\right]\\
 &=\frac{m}{x+m-1}\left(I+\frac{x-1}{m}J\right).
\end{align*}
Since \(J\mathbf 1=m\mathbf 1\) and \(Jw=0\) for every \(w\perp\mathbf 1\), the preceding matrix acts as multiplication by \(mx/(x+m-1)\) on \(\operatorname{span}\{\mathbf 1\}\). On \(\mathbf 1^\perp\), it acts as multiplication by \(m/(x+m-1)\), and this eigenvalue has multiplicity \(m-1\). The matrix is positive semidefinite, so its singular values are precisely these eigenvalues. Consequently,
\begin{equation}\label{eq:denominator}
 \left\|\sum_{k=1}^m |A_k|\right\|_p^p
 =\left(\frac{mx}{x+m-1}\right)^p
 +(m-1)\left(\frac{m}{x+m-1}\right)^p
 =\left(\frac{m}{x+m-1}\right)^p(x^p+m-1).
\end{equation}

We next compute the other norm. Expanding \eqref{eq:Ak} and summing over \(k\), the coefficient of \(J\) is
\[
 \frac{\sqrt x-1}{m}+\frac{\sqrt y-1}{m}
 +\frac{(\sqrt x-1)(\sqrt y-1)}{m}
 =\frac{\sqrt{xy}-1}{m}.
\]
Thus
\[
 \sum_{k=1}^m A_k
 =\frac{m}{\sqrt{(x+m-1)(y+m-1)}}
 \left(I+\frac{\sqrt{xy}-1}{m}J\right).
\]
This matrix is positive semidefinite. It acts as multiplication by
\(m\sqrt{xy}/\sqrt{(x+m-1)(y+m-1)}\) on
\(\operatorname{span}\{\mathbf 1\}\). On \(\mathbf 1^\perp\), it acts as multiplication by
\(m/\sqrt{(x+m-1)(y+m-1)}\), and this eigenvalue has multiplicity \(m-1\). Hence
\begin{align}
 \left\|\sum_{k=1}^m A_k\right\|_p^p
 &=\left(\frac{m\sqrt{xy}}{\sqrt{(x+m-1)(y+m-1)}}\right)^p
 +(m-1)\left(\frac{m}{\sqrt{(x+m-1)(y+m-1)}}\right)^p\notag\\
 &=\left(\frac{m}{\sqrt{(x+m-1)(y+m-1)}}\right)^p
 \bigl((xy)^{p/2}+m-1\bigr).\label{eq:numerator}
\end{align}

Dividing \eqref{eq:numerator} by \eqref{eq:denominator} gives
\begin{equation}\label{eq:exact-quotient}
 \frac{\left\|\sum_{k=1}^m A_k\right\|_p^p}
 {\left\|\sum_{k=1}^m |A_k|\right\|_p^p}
 =
 \left(\frac{x+m-1}{y+m-1}\right)^{p/2}
 \frac{(xy)^{p/2}+m-1}{x^p+m-1}.
\end{equation}

We now compare \eqref{eq:exact-quotient} with the \(p\)-th power of the conjectured constant. Since \(1<p<2\) and \(y=x^{p/(2-p)}\),
\begin{align*}
 x^{p/2}\bigl[(y+m-1)^{p/2}-y^{p/2}\bigr]
 &=\frac p2 x^{p/2}\int_y^{y+m-1}t^{p/2-1}\,dt\\
 &<\frac p2(m-1)x^{p/2}y^{p/2-1}\\
 &=\frac p2(m-1)\\
 &<m-1.
\end{align*}
Therefore
\[
 (xy)^{p/2}+m-1>x^{p/2}(y+m-1)^{p/2}.
\]
Using this inequality in \eqref{eq:exact-quotient},
\begin{align*}
 \frac{\left\|\sum_{k=1}^m A_k\right\|_p^p}
 {\left\|\sum_{k=1}^m |A_k|\right\|_p^p}
 &=\left(\frac{x+m-1}{y+m-1}\right)^{p/2}
 \frac{(xy)^{p/2}+m-1}{x^p+m-1}\\
 &>\left(\frac{x+m-1}{y+m-1}\right)^{p/2}
 \frac{x^{p/2}(y+m-1)^{p/2}}{x^p+m-1}\\
 &=\frac{\bigl(x(x+m-1)\bigr)^{p/2}}{x^p+m-1}.
\end{align*}
Taking \(p\)-th roots proves \eqref{eq:strict-counterexample}, and hence
\[
 c_p(m)>\frac{\sqrt{x(x+m-1)}}{(x^p+m-1)^{1/p}}.
\]
\end{proof}

\end{document}